\theoremstyle{definition}
\newtheorem{thm}{Theorem}[section]
\newtheorem{cor}[thm]{Corollary}
\newtheorem{lem}[thm]{Lemma}
\newtheorem{rem}[thm]{Remark}
\newtheorem{prop}[thm]{Proposition}
\newtheorem{defn}[thm]{Definition}
\numberwithin{equation}{section}
\def\1{{\mathbbm{1}}}
\newcommand{\Hom}{{\rm Hom}}
\newcommand{\End}{{\rm End}}
\def\lra{{\longrightarrow}}
\def\udmod{{\mbox{-}\mathrm{\underline{mod}}}}  %% finitely-generated graded modules
\def\Ext{{\mathrm{Ext}}}
\def\shuffle{\,\raise 1pt\hbox{$\scriptscriptstyle\cup{\mskip
               -4mu}\cup$}\,}
\newcommand{\refequal}[1]{\xy {\ar@{=}^{#1}
(-1,0)*{};(1,0)*{}};
\endxy}
\newcommand{\dmod}{\mbox{-}\mathrm{mod}}
\title{
\hspace{5in} {\it {\normalsize To my wife Bai Xue}} \\
 \hspace{1in}  \\
Morphism spaces in stable categories of Frobenius algebras}
\author{You Qi}
\date{January 23, 2018}
\begin{document}
%
% ==============================================================================

\maketitle

\begin{abstract}
We present an explicit formula computing morphism spaces in the stable category of a Frobenius algebra.
\end{abstract}

\setcounter{tocdepth}{2} 
%\tableofcontents

%%%%%%%%%%%%%%%%%%%%%%%%%%%%%%%%%
%%%%%%%%%%%%%%%%%%%%%%%%%%%%%%%%%
%%%%%%%%   INTRODUCTION  %%%%%%%%
%%%%%%%%%%%%%%%%%%%%%%%%%%%%%%%%%
%%%%%%%%%%%%%%%%%%%%%%%%%%%%%%%%%

\section{Introduction}
The stable categories of self-injective algebras are among the first appearances of triangulated categories besides the usual homotopy or derived categories. Their study has played fundamental roles in representation theory (finite group schemes) and algebraic geometry (singularity theory, matrix factorizations etc.). We refer the reader to the beautiful books \cite{Hap88, ZimRepBook} for an introduction and interesting applications.

Motivated by some recent application of stable categories of finite-dimensional Hopf algebras in categorification (see, e.g., \cite{Hopforoots, QYHopf, QiSussan2}, for motivations and some applications), one would like to have a better understanding of morphism spaces in stable categories. After all, the understanding of a category should be focused on its morphism spaces rather than only on objects. As an important class of examples, finite-dimensional Hopf algebras are Frobenius by a classical result of Larson and Sweedler \cite{LaSw}, and thus are self-injective. In \cite{QYHopf}, a formula describing the morphism spaces in the stable category of finite-dimensional Hopf algebras is given, utilizing the Hopf algebra action on morphism spaces between arbitrary modules. It is therefore a natural question to ask whether one can generalize the formula to arbitrary self-injective algebras. In this note, we provide an answer to this question when the self-injective algebra is \emph{Frobenius}. 

In a bit more detail, recall that for a self-injective algebra $A$, its stable module category, denoted $A\udmod$, is the quotient of the abelian $A$-modules by the ideal of morphisms that factor through projective-injective modules. A classical result that goes back to Heller \cite{Heller} tells us that $A\udmod$ is triangulated.

Also recall that (see, e.g., \cite{Kadison}) a Frobenius $A$ over a ground field $\Bbbk$ is characterized by a linear algebraic system $(\epsilon, \{a_i|i\in I\}, \{b_i|i\in I\})$ which we refer to as a \emph{Frobenius system}. Here $\epsilon:A\lra \Bbbk$ is a non-degenerate trace function and $ \{a_i\} $, $ \{b_i\}$ are dual bases of $A$ under the trace pairing. Frobenius algebras are self-injective. In terms of the Frobenius datum for $A$, we call an $A$-module map between two modules $M$ and $N$ to be \emph{null-homotopic}\footnote{Here the analogy is taken from the Frobenius algebra $\Bbbk[x]/(x^2)$ when $\mathrm{char}(\Bbbk)=2$. The dual bases are $\{1,x\}$ and $\{ x,1\}$ respectively, so that ``null-homotopic'' maps of the form $f=xh+hx$,  agreeing with the usual notion of null-homotopies.} if there is a $\Bbbk$-linear map $h:M\lra N$ such that 
\begin{equation}
f(m)=\sum_{i\in I}a_ih(b_im),
\end{equation}
for any $m\in M$.
Our main result (Theorem \ref{thm-main}) states that, for any two $A$-modules $M$ and $N$:
\begin{equation}
\Hom_{A\udmod}(M,N)=\dfrac{\Hom_A(M,N)}{\{\textrm{null-homotopic maps}\}}.
\end{equation}
By comparison with the usual homotopy formula for chain complexes, this formulation of the morphism space appears quite pleasing and fundamental in nature. The result might possibly be known to experts in the field (one can see implicit shadows of the formula from, e.g., \cite[Definition 1.6]{Broue} or \cite[Section 3]{LZZ}), nevertheless, the author could not find it explicitly stated in references, and thus would like to record the formula in this note. As a straightforward application, we provide a simple proof of Brou\'{e}'s Theorem characterizing the stable center \cite[Proposition 3.13]{Broue}, even when the Frobenius algebra is non-symmetric.

The main result (Theorem \ref{thm-main}) is presented in Section \ref{sec-Mor-space} after some basic notions are recalled in Section \ref{sec-Frob-alg}. In the last Section \ref{sec-apps}, we point out a few straightforward applications. Theorem \ref{thm-main} generalizes readily to super and/or graded Frobenius algebras. It also adapts itself to the more relative situation of Frobenius ring extensions as in \cite[Chapter 1]{Kadison} without essential changes. We expect that the characterization of stable morphism spaces in this note will prove useful in further applications.

\paragraph{Acknowledgments.} I would like to express my deepest appreciation to my wife, Ms.~Xue Bai, for her love, care and always being so supportive. It is my great pleasure to dedicate this cute little formula to her, on the occasion of our first anniversary.

The author is partially funded through the NSF research grant DMS-1763328.

%%%%%%%%%%%%%%%%%%%%%%%%%%%%%%%%%%%%%%%%%
%%%%%%%%%%%%%%%%%%%%%%%%%%%%%%%%%%%%%%%%%
%%%%%%%%  Frobenius Algebra
%%%%%%%%%%%%%%%%%%%%%%%%%%%%%%%%%%%%%%%%%
%%%%%%%%%%%%%%%%%%%%%%%%%%%%%%%%%%%%%%%%%

\section{Basic definitions} \label{sec-Frob-alg}

\paragraph{Notation.} In this paper, we will let $\Bbbk$ denote a fixed ground field once and for all. Unadorned tensor product ``$\otimes$'' stands for tensor product over $\Bbbk$, and likewise $\Hom$ stands for the space of $\Bbbk$-linear homomorphisms. For any $\Bbbk$-algebra, we will denote by $A^e:=A\otimes A^{\mathrm{op}}$ be the \emph{enveloping algebra}, so that a left $A^e$-module is non other than an $(A,A)$-bimodule.

\paragraph{Stable categories of self-injective algebras.} Let $A$ be a self-injective algebra over a field $\Bbbk$. The stable category of $A$-modules, denoted by $A\udmod$, is the categorical quotient of $A\dmod$ by the class of projective-injective objects. More precisely, for any two $A$-modules $M,N$, let us denote the space of morphisms in $A\udmod$ factoring through projective injective $H$-modules by $\mathrm{I}(M,N)$. It is readily seen that, the collection of $\mathrm{I}(M,N)$'s ranging over all $M,N\in A\dmod$ constitutes an ideal in $A\dmod$. Then $A\udmod$ has the same objects as that of $A\dmod$, while the morphism space between two objects $M,N\in A\dmod$ is by definition the quotient
\begin{equation}
\Hom_{A\udmod}(M,N):=\dfrac{\Hom_{A\dmod}(M,N)}{\mathrm{I}(M,N)}.
\end{equation}
A classical Theorem of Heller states that $A\udmod$ is triangulated. The shift functor $[1]$ on $A\udmod$ is defined as follows. For any $M\in A\dmod$, choose an injective envelope $I_M$ for $M$ in $A\dmod$ and let $M^\prime$ be the cokernel of the natural injection:
\[
0\longrightarrow M\longrightarrow I_M\longrightarrow M^\prime\longrightarrow 0.
\]
Then $M[1]:= M^\prime$. The inverse functor $[-1]$ can be defined similarly by taking a projective cover and the corresponding kernel of the natural surjection map. We refer the reader to the references \cite{Hap88, ZimRepBook} for more details on this fundamental construction.

The main goal of this note is to point out an explicit description of the morphism space when the self-injective algebra arises as a Frobenius algebra. 

\paragraph{Frobenius algebras.}Let us recall the definition and basic properties of Frobenius algebras, following \cite{Kadison}.

\begin{defn}\label{def-Frob-system}
A $\Bbbk$-algebra $A$ is called \emph{Frobenius} if $A$ is equipped with a $\Bbbk$-linear \emph{trace function}
\[
\epsilon: A\lra \Bbbk
\]
and dually-paired bases $\{a_i|i=1,\dots, \mathrm{dim}(A)\}$ $\{b_i|i=1,\dots, \mathrm{dim}(A)\}$ satisfying
\[
\sum_{i}a_i\epsilon(b_ia)=a=\sum_{i}\epsilon(aa_i)b_i
\]
for any $a\in A$.
\end{defn}

We will refer to the datum of $(\epsilon, \{a_i\}, \{b_i\})$ for $A$ as a \emph{Frobenius system}.

\begin{rem}\label{rem-Frob-properties}
\begin{enumerate}
\item[(i)]One can easily show that the definition above is equivalent to the more classical definition of requiring that $\epsilon$ to be non-degenerate, in the sense that for any $a\in A$, there exists $b\in A$ such that $\epsilon(ab)=1$. Alternatively, this is equivalent to requiring the map
\[
A\lra A^*,\quad a\mapsto (a\cdot \epsilon:b\mapsto \epsilon(ba),\  \forall b\in A)
\]
\[
\left( \textrm{resp.}~A\lra A^*,\quad a\mapsto (\epsilon\cdot a: b\mapsto \epsilon(ab), \ \forall b\in A),\right)
\]
to be an isomorphism of left (resp.~right) $A$-modules.
\item[(ii)] Frobenius systems for $A$ constitute a torsor under the group action of (multiplicative) invertible elements of $A$. More precisely, if  $(\epsilon, \{a_i\}, \{b_i\})$ is Frobenius system for $A$ and $d\in A$ is invertible, then  $(d\cdot \epsilon , \{a_id^{-1}\}, \{b_i\})$ (or  $(\epsilon\cdot d, \{a_i\}, \{d^{-1}b_i\})$) are other Frobenius systems for $A$. The correspondence is a one-one bijection (\cite[Theorem 1.6]{Kadison}).
\item[(iii)] From the definition, it is clear that, if $A$, $B$ are Frobenius, then so are $A^\mathrm{op}$ and $A\otimes B$. In particular, $A^e=A\otimes A^{\mathrm{op}}$ is Frobenius. It is easy to see that a Frobenius system for $A^{e}$ can be taken to be $(\epsilon\otimes \epsilon, \{a_i\otimes b_j\}, \{b_i\otimes a_j\})$.
\end{enumerate}
\end{rem}

We recall the following definition from \cite[Corollary 1.5]{Kadison}.

\begin{defn}
Let $A$ be a Frobenius algebra with Frobenius system  $(\epsilon, \{a_i\}, \{b_i\})$. The \emph{Frobenius element} for $A$ is defined to be
\[
\xi_A:=\sum_{i}a_i\otimes b_i\in A\otimes A^\mathrm{op}.
\]
\end{defn}

\begin{lem}\label{lem-canonical-bimod}
The Frobenius element satisfies 
\[
\sum_{i}aa_i\otimes b_i=\sum_i a_i\otimes b_ia
\]
for all $a\in A$. Consequently, the map
\[
A\cong \xi_A A^e ,\quad a\mapsto \xi_Aa=a\xi_A
\]
is an isomorphism of $(A,A)$-bimodules.
\end{lem}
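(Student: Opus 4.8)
The plan is to prove the two claims in sequence: first the commutation identity $\sum_i a a_i \otimes b_i = \sum_i a_i \otimes b_i a$ inside $A^e$, and then deduce the bimodule isomorphism. For the commutation identity, I would use the defining property of a Frobenius system twice, expanding both sides by inserting the resolution of the identity. Concretely, write the left-hand side and replace $a_i$ by $\sum_j a_j \epsilon(b_j a_i)$ (using the first Frobenius relation applied to $a_i$); rearranging the double sum, the coefficient of $a_j \otimes b_i$ becomes something one can repackage using the second Frobenius relation $\sum_i \epsilon(b\, a_i) b_i = b$. Alternatively, and perhaps more cleanly, one can recognize this as a statement about the element $\xi_A$ being central in an appropriate sense: under the isomorphism $A \cong A^*$ of Remark \ref{rem-Frob-properties}(i), the element $\sum_i a_i \otimes b_i$ corresponds to the identity map $\id_A \in A \otimes A^* \cong \End_{\Bbbk}(A)$, and the two sides of the identity correspond to the two ways of pre- and post-composing $\id_A$ with left multiplication by $a$, which visibly agree. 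I expect to write out the direct computation since it is short and self-contained.

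Once the commutation identity holds, the element $\xi_A a = a \xi_A$ is well-defined (the two expressions agree), so the map $\phi: A \to \xi_A A^e$, $a \mapsto \xi_A a$, makes sense. I would check it is a bimodule map: for $b, c \in A$, one needs $\phi(b a c) = b \cdot \phi(a) \cdot c$ where the bimodule structure on $\xi_A A^e \subseteq A^e$ is by left and right multiplication within $A^e$; here left multiplication by $b \in A$ means multiplication by $b \otimes 1$ and right multiplication by $c$ means multiplication by $1 \otimes c$. Since $\xi_A (bac) = (b \otimes 1)\, \xi_A\, a\, (1 \otimes c)$ follows from $\xi_A b = b \xi_A$ and the definition of the $A^e$-action, this is routine.

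For surjectivity, note that $\xi_A A^e$ is by definition spanned by elements $\xi_A (x \otimes y) = \sum_i a_i x \otimes y b_i$; using the commutation identity one can push all of $x$ and $y$ "through" $\xi_A$ to rewrite this as $\xi_A \cdot (\text{something in } A)$, so the image of $\phi$ is all of $\xi_A A^e$. For injectivity, suppose $\xi_A a = 0$, i.e.\ $\sum_i a_i a \otimes b_i = 0$ in $A \otimes A^{\op}$. Apply the $\Bbbk$-linear map $\id \otimes \epsilon: A \otimes A \to A$ to get $\sum_i a_i a\, \epsilon(b_i) = 0$; but more usefully apply $\id_A \otimes (\epsilon \cdot c)$ for the functional $b \mapsto \epsilon(bc)$ — wait, more directly: since $\{b_i\}$ is a basis of $A$, the vanishing of $\sum_i a_i a \otimes b_i$ forces $a_i a = 0$ for every $i$, and since $\{a_i\}$ is also a basis, the relation $a = \sum_i \epsilon(a a_i) b_i$ combined with $a_i a = 0$... hmm, the cleanest route is: from $\sum_i (a_i a) \otimes b_i = 0$ and linear independence of $\{b_i\}$, conclude $a_i a = 0$ for all $i$; then $a = \sum_i \epsilon(a a_i) b_i$ doesn't immediately help, so instead use $a = \sum_i a_i \epsilon(b_i a)$ — this also doesn't directly use $a_i a = 0$. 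The correct argument: apply the bimodule map $A^e \to A$ given by multiplication $x \otimes y \mapsto xy$ composed with nothing — better, note $\dim_\Bbbk(\xi_A A^e) = \dim_\Bbbk A$ can be established by observing $\xi_A A^e$ is a cyclic $A$-module (via the isomorphism already shown to be surjective onto it from the $\dim A$-dimensional $A$), and then a left-exactness/counting argument. I expect the injectivity to be the only genuinely delicate point; the resolution is to apply $\epsilon$ in the second tensor factor after right-multiplying by suitable basis elements, namely from $\xi_A a = 0$ we get $\sum_i a_i a \otimes b_i = 0$, hence $\sum_i a_i a\, \epsilon(b_i b) = 0$ for all $b$, which by the Frobenius relation (second form applied to the element... ) collapses to $ab' = 0$-type statements; chasing this with $b$ ranging over a basis and using nondegeneracy of $\epsilon$ recovers $a = 0$. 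The main obstacle is thus organizing this injectivity computation cleanly; everything else is a direct unwinding of definitions.
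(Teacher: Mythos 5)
The paper does not prove this lemma at all --- it simply cites \cite[Corollary 1.5]{Kadison} --- so any self-contained argument is already doing more than the text. Your plan for the commutation identity is essentially the standard one, but as written it is circular: substituting $a_i=\sum_j a_j\epsilon(b_ja_i)$ and collapsing with the second relation just returns $\sum_i aa_i\otimes b_i$. The correct move is to expand the \emph{product}, $aa_i=\sum_j a_j\epsilon(b_j(aa_i))$, reassociate to $\epsilon((b_ja)a_i)$, and then apply $\sum_i\epsilon(ca_i)b_i=c$ with $c=b_ja$; that one-line fix makes the first claim correct.

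The genuine gap is in your surjectivity argument. You interpret $\xi_AA^e$ as the right ideal, so a general element is $\xi_A(x\otimes y)=\sum_i a_ix\otimes yb_i$, with $x$ sitting to the \emph{right} of $a_i$ and $y$ to the \emph{left} of $b_i$. But the commutation identity only exchanges the complementary positions (left of $a_i$ with right of $b_i$); moving an element from the right of $a_i$ into the second factor requires the Nakayama automorphism, i.e.\ the separate identity $\sum_i a_ic\otimes b_i=\sum_i a_i\otimes \nu(c)b_i$, which does not follow from the stated lemma and needs its own $\epsilon$-computation. The reading consistent with the formula $a\xi_A=\xi_Aa$ just proven is that $A^e$ carries the outer bimodule structure $b\cdot(p\otimes q)\cdot c=bp\otimes qc$, so the sub-bimodule generated by $\xi_A$ consists of elements $\sum_i xa_i\otimes b_iy$; for these the commutation identity \emph{does} apply and gives $\sum_i xa_i\otimes b_iy=\sum_i a_i\otimes b_iyx=\xi_A\cdot(yx)$, whence surjectivity is immediate. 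You should fix the convention rather than invoke the Nakayama identity silently. Injectivity, by contrast, is a one-liner you circle around without landing: from $\sum_i a_i\otimes b_ia=0$ apply $\epsilon\otimes\id$ (or use your observation that $a_ia=0$ for all $i$ together with $1=\sum_i a_i\epsilon(b_i)$) to get $a=\sum_i\epsilon(a_i)b_ia=0$ directly from Definition \ref{def-Frob-system}; no dimension count or basis-chasing with $\epsilon(b_ib)$ is needed.
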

\begin{proof}
See \cite[Corollary 1.5]{Kadison}.
\end{proof}

%%%%%%%%%%%%%%%%%%%%%%%%%%%%%%%%%%%%%%%%%
%%%%%%%%%%%%%%%%%%%%%%%%%%%%%%%%%%%%%%%%%
%%%%%%%%  Morphism Space
%%%%%%%%%%%%%%%%%%%%%%%%%%%%%%%%%%%%%%%%%
%%%%%%%%%%%%%%%%%%%%%%%%%%%%%%%%%%%%%%%%%

\section{Morphism spaces} \label{sec-Mor-space}

If $M$ is any $A$-module, let us denote by $M_0$ the underline $\Bbbk$-vector space of $M$. Likewise, if $f:M\lra N$ is an $A$-linear map, we will denote by $f_0: M_0\lra N_0$ the corresponding $\Bbbk$-vector space map.

\begin{lem}\label{lem-canonical-embedding}
Let $A$ be a Frobenius algebra with its Frobenius system  $(\epsilon,\{a_i\},\{b_i\})$. If $M$ is any $A$-module, there is a canonical embedding of left $A$-modules
\[
\phi_M: M\lra A\otimes M_0, \quad m\mapsto \sum_i a_i\otimes b_i m.
\]
\end{lem}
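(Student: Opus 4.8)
The plan is to unpack the word ``embedding'' into its two components and verify each separately: that $\phi_M$ is a morphism of left $A$-modules, and that it is injective. Throughout I will use the left $A$-module structure on $A\otimes M_0$ given by acting on the first tensor factor alone, so that $a\cdot(x\otimes v)=(ax)\otimes v$.

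First I would deduce $A$-linearity directly from Lemma \ref{lem-canonical-bimod}. For $m\in M$ and $a\in A$ one computes $\phi_M(am)=\sum_i a_i\otimes b_i(am)=\sum_i a_i\otimes(b_ia)m$, whereas $a\cdot\phi_M(m)=\sum_i(aa_i)\otimes b_im$. Now apply the $\Bbbk$-linear map $A\otimes A\to A\otimes M_0$, $x\otimes y\mapsto x\otimes ym$, to the identity $\sum_i aa_i\otimes b_i=\sum_i a_i\otimes b_ia$ of Lemma \ref{lem-canonical-bimod}: the left-hand side becomes $a\cdot\phi_M(m)$ and the right-hand side becomes $\phi_M(am)$. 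Hence $\phi_M(am)=a\cdot\phi_M(m)$, and linearity costs nothing beyond the commutation relation for the Frobenius element.

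Second, I would prove injectivity by exhibiting an explicit $\Bbbk$-linear retraction. Consider $\epsilon\otimes\mathrm{id}_{M_0}\colon A\otimes M_0\to\Bbbk\otimes M_0=M_0$. Composing with $\phi_M$ sends $m$ to $\sum_i\epsilon(a_i)b_im=\bigl(\sum_i\epsilon(a_i)b_i\bigr)m$. Specializing the defining Frobenius identity $\sum_i\epsilon(aa_i)b_i=a$ to $a=1$ gives $\sum_i\epsilon(a_i)b_i=1$, so $(\epsilon\otimes\mathrm{id}_{M_0})\circ\phi_M=\mathrm{id}_M$ as $\Bbbk$-linear maps. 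Therefore $\phi_M$ is a split monomorphism of vector spaces, in particular injective. (The retraction is not $A$-linear in general, which is consistent with the fact that $A\otimes M_0$ is a free $A$-module while $M$ need not be projective; so $M$ is a summand of $A\otimes M_0$ only after passing to the stable category.)

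I do not anticipate a genuine obstacle here: the whole statement reduces to Lemma \ref{lem-canonical-bimod} together with the Frobenius axioms evaluated at $1$. The only points that require a little care are fixing the bimodule conventions — which tensor factor carries the action and where the ``$\mathrm{op}$'' sits — and, if one wants ``canonical'' to mean ``natural in $M$'', checking that $M\mapsto\phi_M$ is a natural transformation from $\mathrm{id}_{A\dmod}$ to $A\otimes(-)_0$; but this is immediate, since for any $A$-linear $f\colon M\to N$ one has $\phi_N(f(m))=\sum_i a_i\otimes b_if(m)=\sum_i a_i\otimes f(b_im)=(\mathrm{id}_A\otimes f_0)(\phi_M(m))$, using the $A$-linearity of $f$. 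One could alternatively recognize $\phi_M$ conceptually as the unit $M\to\Hom_\Bbbk(A,M_0)$ of the (restriction, coinduction) adjunction along $\Bbbk\hookrightarrow A$, transported across the Frobenius isomorphism between $A$ and $A^*$ from Remark \ref{rem-Frob-properties}(i); this viewpoint also makes transparent why the image lands in a projective-injective module, but the hands-on verification above is the shortest route.
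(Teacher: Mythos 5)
Your proof is correct and follows essentially the same route as the paper: $A$-linearity is deduced from the commutation relation $a\xi_A=\xi_Aa$ of Lemma \ref{lem-canonical-bimod}, and injectivity from the same $\Bbbk$-linear retraction $a\otimes m\mapsto\epsilon(a)m$ combined with the defining Frobenius identity evaluated at $1$. The additional remarks on naturality and the coinduction viewpoint go beyond what the paper records but do not alter the argument.
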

\begin{proof}
This follows from the fact that the Frobenius element $\xi_A=\sum_i a_i\otimes b_i$ satisfies
$a\xi_A=\xi_Aa$ for all $a\in A$. The map is an injection since, by Definition \ref{def-Frob-system}, there is a $\Bbbk$-linear splitting map $A\otimes M_0\lra M$ defined by $a\otimes m\mapsto \epsilon(a)m$:
\[
\sum_ia_i\otimes b_im\mapsto \epsilon(a_i)b_im=m. \qedhere
\]
\end{proof}

Despite the fact that the above embedding is usually larger than the injective envelope of $M$, the apparent functoriality in the above construction makes computing the morphism space in $A\udmod$ a lot easier than in, say, that of  an arbitrary non-Frobenius self-injective algebra. 

\begin{lem}\label{lem-factorization}
Let $A$ be a Frobenius algebra, and $f:M\lra N$ be a left $A$-module homomorphism. Then $f$ factors through a projective-injective $A$-module if and only if $f$ is equal to the composition of $A$-module maps
\[
f: M\xrightarrow{\phi_M} A\otimes M_0 \stackrel{h}{\lra} N,
\]
where $h$ is some $A$-linear homomorphism.
\end{lem}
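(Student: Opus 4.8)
The plan is to prove the two implications separately; both are short diagram chases resting on Lemma~\ref{lem-canonical-embedding} together with the fact that a Frobenius algebra is self-injective.

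First I would dispose of the ``if'' direction by checking that $A\otimes M_0$ is itself a projective-injective $A$-module. Choosing a $\Bbbk$-basis $\{v_j\}_{j\in J}$ of $M_0$ identifies $A\otimes M_0$ with the free left $A$-module $\bigoplus_{j\in J}A$, hence it is projective; and since $A$ is Frobenius it is self-injective (and finite-dimensional, so left Noetherian, whence arbitrary direct sums of injectives remain injective), so this module is also injective. Consequently, if $f=h\circ\phi_M$ for some $A$-linear $h\colon A\otimes M_0\to N$, then $f$ factors through the projective-injective module $A\otimes M_0$, which is one half of the claim.

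For the converse, suppose $f\colon M\to N$ factors as $M\xrightarrow{g}P\xrightarrow{k}N$ with $P$ projective-injective and $g,k$ both $A$-linear. By Lemma~\ref{lem-canonical-embedding}, $\phi_M\colon M\to A\otimes M_0$ is a monomorphism of left $A$-modules, so it sits in a short exact sequence $0\to M\xrightarrow{\phi_M}A\otimes M_0\to C\to 0$. Since $P$ is injective, the map $g$ extends along $\phi_M$: there is an $A$-linear map $\widetilde{g}\colon A\otimes M_0\to P$ with $\widetilde{g}\circ\phi_M=g$. Setting $h:=k\circ\widetilde{g}\colon A\otimes M_0\to N$ then gives $h\circ\phi_M=k\circ\widetilde{g}\circ\phi_M=k\circ g=f$, which is the desired factorization.

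The step I expect to carry the real weight — and the conceptual point of the lemma — is the use of injectivity of $P$ to lift $g$ \emph{along the canonical embedding $\phi_M$} rather than along an injective envelope of $M$; this is precisely what lets the functorial, but generally non-minimal, embedding $\phi_M$ detect exactly the maps that vanish in $A\udmod$. By contrast, verifying that $A\otimes M_0$ is projective-injective is routine once one invokes that $A$ is Frobenius, and no further manipulation of the Frobenius system $(\epsilon,\{a_i\},\{b_i\})$ is needed beyond what Lemma~\ref{lem-canonical-embedding} already supplies.
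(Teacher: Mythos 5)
Your proof is correct; for the converse direction you take a mildly different route from the paper's. The paper first reduces to the case where the target $N$ is itself projective-injective, draws the naturality square $\phi_N\circ f=(\mathrm{Id}_A\otimes f_0)\circ\phi_M$, and then uses injectivity of $N$ to produce a retraction $g\colon A\otimes N_0\to N$ of the monomorphism $\phi_N$, so that $f=g\circ(\mathrm{Id}_A\otimes f_0)\circ\phi_M$. You instead keep the intermediate projective-injective module $P$ and use its injectivity to extend $g\colon M\to P$ along the monomorphism $\phi_M$ supplied by Lemma~\ref{lem-canonical-embedding}, which avoids both the reduction step and the naturality square; the two arguments are dual uses of the same ingredient (testing an injective module against the canonical embedding $\phi$) and are equally short. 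A point in your favor is that you explicitly verify the ``if'' direction by checking that $A\otimes M_0$ is projective-injective --- free, hence projective, and injective because $A$ is self-injective and finite-dimensional (hence Noetherian, so direct sums of injectives are injective) --- whereas the paper leaves this half implicit. No gaps.
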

\begin{proof}
It suffices to prove the lemma when $N$ is a projective-injective $A$-module. Consider the commutative diagram
\[
\begin{gathered}
\xymatrix{
M \ar[r]^f \ar[d]_{\phi_M}& N \ar[d]^{\phi_N}\\
A\otimes M_0 \ar[r]^{\mathrm{Id}_A\otimes {f_0}} & A\otimes N_0
}
\end{gathered} \ ,
\]
where $\phi_M$ and $\phi_N$ are the canonical maps as in the previous Lemma. Since $N$ is injective, the canonical injection $\phi_N$ admits a section $g: A\otimes N_0 \lra N$. Therefore $f$ factors through as
\[
\xymatrix{
f: M\ar[r]^-{\phi_M} & A\otimes M_0 \ar[r]^{\mathrm{Id}_A\otimes {f_0}} & A\otimes N_0 \ar[r]^-g & N.
}
\]
The result follows.
\end{proof}

Let $M,~N$ be two $A$-modules. The space of $\Bbbk$-linear maps $\Hom(M,N)$ has a natural $(A,A)$-bimodule structure by declaring, for an $a\otimes b\in A^e=A\otimes A^{\mathrm{op}}$, $\phi\in \Hom(M,N)$ and $m\in M$, that
\begin{equation}\label{eqn-bimod-Hom}
((a\otimes b) \cdot \phi)(m):=a\phi(bm).
\end{equation} 
In particular, we can apply the Frobenius element to any linear map in $\Hom(M,N)$. 

\begin{lem}\label{lem-trivial-invariants}
For any linear map $\phi \in \Hom(M,N)$, 
\[
(\xi_A\cdot \phi):=\sum_i (a_i\otimes b_i)\cdot \phi \in \Hom_A(M,N).
\]
\end{lem}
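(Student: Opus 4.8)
The claim is that the linear map $\xi_A \cdot \phi = \sum_i (a_i \otimes b_i) \cdot \phi$, which a priori is just an element of $\Hom(M,N)$, actually lands in the $A$-linear subspace $\Hom_A(M,N)$. The plan is to verify directly that $\xi_A \cdot \phi$ commutes with the $A$-action on $M$ and $N$, i.e.\ that $(\xi_A \cdot \phi)(a m) = a \, (\xi_A \cdot \phi)(m)$ for every $a \in A$ and $m \in M$. Unwinding the definition \eqref{eqn-bimod-Hom}, the left-hand side is $\sum_i a_i \phi(b_i a m)$ and the right-hand side is $\sum_i a \, a_i \phi(b_i m)$.

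The crucial input is Lemma \ref{lem-canonical-bimod}, which states that the Frobenius element satisfies $\sum_i a a_i \otimes b_i = \sum_i a_i \otimes b_i a$ in $A \otimes A^{\mathrm{op}}$ for all $a \in A$. Concretely, this means that for any fixed $a$, the two families of pairs $\{(a a_i, b_i)\}_i$ and $\{(a_i, b_i a)\}_i$ represent the same tensor, so applying any bilinear pairing to them yields the same result. I would apply this to the bilinear map $A \times A \to \Hom(M,N)$ sending $(x, y)$ to the map $m \mapsto x \phi(y m)$. Applying it to the relation $\sum_i a_i \otimes b_i a = \sum_i a a_i \otimes b_i$ gives precisely
\[
\sum_i a_i \phi(b_i a m) = \sum_i (a a_i) \phi(b_i m) = a \sum_i a_i \phi(b_i m),
\]
which is exactly the equality $(\xi_A \cdot \phi)(a m) = a\,(\xi_A \cdot \phi)(m)$ we wanted.

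There is essentially no obstacle here: the statement is a formal consequence of the $A^e$-centrality of the Frobenius element recorded in Lemma \ref{lem-canonical-bimod}, combined with the definition of the bimodule structure on $\Hom(M,N)$. The only point requiring a modicum of care is bookkeeping with the opposite algebra --- ensuring that the relation from Lemma \ref{lem-canonical-bimod}, which is phrased in $A \otimes A^{\mathrm{op}}$, is transported correctly through the action \eqref{eqn-bimod-Hom}, where the second tensor factor acts on the source of $\phi$. Once that is tracked, the computation is a two-line manipulation. Conceptually, this lemma is the statement that ``averaging'' an arbitrary linear map against the Frobenius element produces an honest module map, which is the key mechanism underlying the notion of null-homotopy and hence the main theorem.
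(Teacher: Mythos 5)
Your proof is correct and follows essentially the same route as the paper: both apply the relation $\sum_i aa_i\otimes b_i=\sum_i a_i\otimes b_ia$ from Lemma \ref{lem-canonical-bimod} to the bimodule action on $\phi$ to deduce $A$-linearity of $\xi_A\cdot\phi$. (Incidentally, the paper's proof cites Lemma \ref{lem-canonical-embedding} for this relation, but the relation actually appears in Lemma \ref{lem-canonical-bimod}, which is the reference you correctly use.)
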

\begin{proof}
By Lemma \ref{lem-canonical-embedding}, $\sum_i aa_i\otimes b_i=\sum_{i}a_i\otimes b_ia$ for any $a\in A$. Applying this equation to $\phi$, we obtain that the equality
\[
(\sum_i aa_i \phi(b_im))=\sum_{i}(a_i\phi(b_iam))
\]
holds for all $m\in M$. The result follows.
\end{proof}

\begin{prop}\label{prop-characterization}
Let $A$ be a Frobenius algebra as above. 
An $A$-module map $f:M\lra N$ factors through the canonical $A$-module map $\phi_M:M\lra A\otimes M_0$ if and only if there is a $\Bbbk$-linear map $\phi: M_0 \lra N_0$ such that, for any $m\in M$,
\[
f(m)=\sum_i a_i\phi(b_im)
\]
\end{prop}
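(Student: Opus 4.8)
The plan is to prove the equivalence directly by unwinding both conditions in terms of the Frobenius element. Recall that $\phi_M(m) = \sum_i a_i \otimes b_i m$ and that, by definition, a general element of $A \otimes M_0$ has the form $\sum_j c_j \otimes n_j$ with $c_j \in A$, $n_j \in M_0$. An $A$-linear map $h \colon A \otimes M_0 \lra N$ is determined by its restriction to $1 \otimes M_0$, since $h(c \otimes n) = c \cdot h(1 \otimes n)$; that is, $h$ corresponds bijectively to an arbitrary $\Bbbk$-linear map $\psi \colon M_0 \lra N_0$ via $h(c \otimes n) = c\,\psi(n)$. This is the key structural observation: $A$-linear maps out of the induced (coinduced) module $A \otimes M_0$ are the same as $\Bbbk$-linear maps out of $M_0$, by Frobenius reciprocity / adjunction.

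Given that, the forward direction is immediate: if $f$ factors as $f = h \circ \phi_M$ for some $A$-linear $h$, set $\phi := \psi$ to be the $\Bbbk$-linear map corresponding to $h$ as above. Then for any $m \in M$,
\[
f(m) = h(\phi_M(m)) = h\Bigl(\sum_i a_i \otimes b_i m\Bigr) = \sum_i a_i\, \psi(b_i m) = \sum_i a_i\, \phi(b_i m),
\]
which is exactly the desired formula. Conversely, given a $\Bbbk$-linear $\phi \colon M_0 \lra N_0$ with $f(m) = \sum_i a_i \phi(b_i m)$, define $h \colon A \otimes M_0 \lra N$ by $h(c \otimes n) := c\,\phi(n)$ and extending $\Bbbk$-linearly; this is well-defined on the tensor product over $\Bbbk$ and is manifestly left $A$-linear. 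Then $h(\phi_M(m)) = \sum_i a_i \phi(b_i m) = f(m)$, so $f = h \circ \phi_M$ factors through $\phi_M$ as claimed.

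The only point requiring a little care — and what I expect to be the main (though minor) obstacle — is checking that $h$ defined by $c \otimes n \mapsto c\,\phi(n)$ genuinely descends to the tensor product $A \otimes_{\Bbbk} M_0$ and is $A$-linear rather than merely well-defined as a map on pure tensors; this is standard since $\phi$ is $\Bbbk$-linear and $M_0$ carries no $A$-action, so $A \otimes M_0$ is a free left $A$-module on a basis of $M_0$ and $h$ is just the induced map. One should also note the implicit consistency check that the formula $f(m) = \sum_i a_i \phi(b_i m)$ does define an $A$-linear map $f$ in the first place — but this is precisely the content of Lemma \ref{lem-trivial-invariants} applied to the linear map $\phi$ (viewed as an element of $\Hom(M_0,N_0) \cong \Hom(M,N)$ as $\Bbbk$-spaces), namely that $\xi_A \cdot \phi \in \Hom_A(M,N)$. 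Combining this proposition with Lemma \ref{lem-factorization} then yields the main theorem, since ``factors through a projective-injective'' is equivalent to ``factors through $\phi_M$'', which is equivalent to being null-homotopic in the sense of the introduction.
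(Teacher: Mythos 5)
Your proof is correct and follows essentially the same route as the paper's: both directions restrict an $A$-linear map on $A\otimes M_0$ to $1\otimes M_0$ (forward) and induce one from a $\Bbbk$-linear map by $c\otimes n\mapsto c\,\phi(n)$ (converse), with the $A$-linearity of $f$ supplied by Lemma \ref{lem-trivial-invariants}. Your framing via the adjunction between $A$-linear maps out of the free module $A\otimes M_0$ and $\Bbbk$-linear maps out of $M_0$ is just a cleaner packaging of the same argument.
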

\begin{proof}
First off, suppose $f:M\lra N$ factors through $\phi_M$ as
\[
f: M\xrightarrow{\phi_M} A\otimes M_0 \stackrel{\psi}{\lra} N
\]
for some $A$-linear map $\psi$.
Setting $\phi$ to be the $\Bbbk$-linear map $\psi|_{1\otimes M_0}:{M_0}\lra N$, we have
\[
f(m)=\psi\circ \phi_M(m)=\psi(\sum_ia_i\otimes b_im)=\sum_ia_i\psi(1\otimes b_im)=\sum_ia_i\phi (b_im).
\]
Here in the third equality, we have used that $\psi$ is $A$-linear.

Conversely, if $f$ arises as $f(m):=\sum_{i}a_i\phi(b_im)$ for some $\phi:M_0\lra N_0$, then $f$ is $A$-linear by the previous Lemma. Define
\[
\psi: A\otimes M_0 \lra N ,\quad
\psi (a\otimes m):=a\phi(m)
\]
This is clearly $A$-linear, and, for any $m\in M$,
\[
\psi\circ \phi_M(m)=\sum_i\psi(a_i\otimes b_im)=\sum_ia_i\phi(b_im)=f(m),
\]
giving us the desired factorization of $f$.
\end{proof}

For any $A^e$-module $H$, we define the space of \emph{$A$-invariants} to be
\begin{equation}
H^A:=\Hom_{A^e}(A,H).
\end{equation} 
Since $\Hom(M,N)$ is an $A^e$-module (equation \eqref{eqn-bimod-Hom}), the space of $A$-invaraints in $\Hom(M,N)$ equals
\begin{equation}\label{eqn-Hom-A-as-invariants}
\Hom(M,N)^A:= \Hom_{A^e}(A,\Hom(M,N)).
\end{equation} 
Any element of $\Hom_{A^e}(A,\Hom(M,N))$ sends the central unit element $1_A$ into some $f\in \Hom(M,N)$ satisfying, for any $a\in A$, that $fa=af$, i.e., $f$ is an $A$-linear map. Conversely, any $A$-linear $f$ determines an $(A,A)$-bimodule map from $A$ to $\Hom(M,N)$, sending $a\in A$ to $af(\mbox{-})=f(a\mbox{-})=fa(\mbox{-})$.  It follows that
\begin{equation}
\Hom(M,N)^A\cong\Hom_{A^e}(A, \Hom(M,N))\cong \Hom_A(M,N).
\end{equation}
By Lemma \ref{lem-trivial-invariants}, it is clear that
\[
\xi_A\cdot \Hom(M,N)\subset \Hom(M,N)^A,
\]
so that the quotient $\frac{\Hom_A(M,N)}{\xi_A\cdot \Hom(M,N)}$ makes sense.

\begin{thm}\label{thm-main}
Let $A$ be a Frobenius algebra with a Frobenius system $(\epsilon,\{a_i\}, \{b_i\})$, and let $M$, $N$ be two left $A$-modules. Then there is an isomorphism of $\Bbbk$-vector spaces
\[
\Hom_{A\udmod}(M,N)\cong \dfrac{\Hom_A(M,N)}{\xi_A\cdot \Hom(M,N)},
\]
which is natural in $M$ and $N$.
\end{thm}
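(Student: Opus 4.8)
The plan is to assemble the theorem directly from the lemmas and the discussion immediately preceding the statement. All the genuine work has been done: Lemma~\ref{lem-factorization} says an $A$-module map $f:M\lra N$ factors through a projective-injective module exactly when it factors through the canonical embedding $\phi_M:M\lra A\otimes M_0$; Proposition~\ref{prop-characterization} says such a factorization exists exactly when $f(m)=\sum_i a_i\phi(b_i m)$ for some $\Bbbk$-linear $\phi:M_0\lra N_0$; and equation~\eqref{eqn-bimod-Hom} identifies the right-hand side of that formula with the action of the Frobenius element, so the set of such $f$ is precisely $\xi_A\cdot\Hom(M,N)$. Finally the paragraph after Lemma~\ref{lem-trivial-invariants} identifies $\Hom_A(M,N)$ with $\Hom(M,N)^A$ and records that $\xi_A\cdot\Hom(M,N)\subseteq\Hom(M,N)^A$, so the quotient is well-defined.

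So the first step is to spell out the chain of equalities of subspaces of $\Hom_A(M,N)$:
\[
\mathrm{I}(M,N)=\{f \text{ factoring through proj.-inj.}\}=\{f \text{ factoring through } \phi_M\}=\xi_A\cdot\Hom(M,N),
\]
the first equality by definition of the stable category, the second by Lemma~\ref{lem-factorization}, the third by Proposition~\ref{prop-characterization} together with the identification of $\sum_i a_i\phi(b_i\,\mbox{-})$ with $(\xi_A\cdot\phi)$ via \eqref{eqn-bimod-Hom}. The second step is to quotient: since $\Hom_{A\udmod}(M,N)=\Hom_A(M,N)/\mathrm{I}(M,N)$ by definition, substituting the displayed identification yields the claimed isomorphism $\Hom_{A\udmod}(M,N)\cong\Hom_A(M,N)/(\xi_A\cdot\Hom(M,N))$.

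The only remaining point is naturality in $M$ and $N$. For this I would check that $\phi_M$ is natural in $M$ — i.e.\ that for any $A$-map $g:M'\lra M$ the square with $\phi_{M'},\phi_M$ and $\id_A\otimes g_0$ commutes, which is immediate from the formula $\phi_M(m)=\sum_i a_i\otimes b_i m$ (indeed this commuting square already appears in the proof of Lemma~\ref{lem-factorization}). Naturality of the bimodule structure on $\Hom(M,N)$ under precomposition and postcomposition is equally formal from \eqref{eqn-bimod-Hom}, and hence $\xi_A\cdot\Hom(M,N)$, $\Hom_A(M,N)$, and their quotient are all functorial in $(M,N)$ in a way compatible with the functoriality of $\mathrm{I}(M,N)$ and $\Hom_{A\udmod}(M,N)$; the isomorphism built in steps one and two therefore respects these structures. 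I do not anticipate a real obstacle here — the theorem is essentially a repackaging — so the only thing to be careful about is bookkeeping: making sure the identification $\Hom_A(M,N)\cong\Hom(M,N)^A$ is used consistently so that $\xi_A\cdot\Hom(M,N)$ is viewed inside $\Hom_A(M,N)$ and not merely inside $\Hom(M,N)$, and that the map in the statement is the one induced by the identity on $\Hom_A(M,N)$ rather than some twisted version.
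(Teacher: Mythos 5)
Your proposal is correct and follows essentially the same route as the paper's own proof: both identify $\mathrm{I}(M,N)$ with $\xi_A\cdot\Hom(M,N)$ by chaining Lemma~\ref{lem-factorization} and Proposition~\ref{prop-characterization}, use Lemma~\ref{lem-trivial-invariants} to see that the quotient is well-defined, and deduce naturality from the $(A,A)$-bimodule functoriality of $\Hom(M,N)$ under pre- and post-composition. No changes needed.
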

\begin{proof}
Lemma \ref{lem-trivial-invariants} shows that $\xi_A\cdot \Hom(M,N)\subset \Hom_A(M,N)$, and the quotient on the right hand side is well-defined.

By definition, the space of morphisms between $M$ and $N$ in the stable category $A\udmod$ is the quotient of $\Hom_A(M,N)$ by the ideal $\mathrm{I}(M,N)$ consisting of morphisms that factor through projective-injective modules. Via Lemma \ref{lem-factorization}, this ideal coincides with
\[
\mathrm{I}(M,N)=\left\{f|f=\psi\circ\phi_M,~ \psi:A\otimes M_0\lra N\right\}.
\] 
Now Proposition \ref{prop-characterization} shows that $\mathrm{I}(M,N)= \xi_A\cdot \Hom(M,N)$.

Finally, if $g: M^\prime \lra M$ is an $A$-module map, then
\[
g^*:\Hom(M,N)\lra \Hom(M^\prime, N)
\]
is an $(A,A)$-bimodule homomorphism. Similar statements hold for an $A$-module map $h:N\lra N^\prime$. The naturality follows.
\end{proof}

We remark that Theorem \ref{thm-main} also gives a way to compute arbitrary $\Ext$-spaces in $A\udmod$, as follows. Choose an injective embedding for $N$ as in Lemma \ref{lem-canonical-embedding}, so that the quotient is denoted $N^\prime$:
\[
0\lra N\lra A\otimes N_0\lra N^\prime \lra 0.
\]
Then 
\begin{equation}
\Ext_{A\udmod}^1(M,N)=\Hom_{A\udmod}(M,N[1])=\Hom_{A\udmod}(M,N^\prime).
\end{equation}
Continuing the process for $N^\prime$ and iterating gives us all positive $\Ext$-spaces between $M$ and $N$. Likewise, performing a similar construction for $M$ gives rise to all negative $\Ext$-spaces.

We can further compare the stable morphism space $\Hom_{A\udmod}(M,N)$ with another stable morphism space, but rather in the stable category of the enveloping algebra $A^e$.

\begin{cor}
Let $A$ be a Frobenius algebra, and $M,N$ be two $A$-modules. Then there is an isomorphism of $\Hom$-spaces
\[
\Hom_{A\udmod}(M,N)=\Hom_{A^e\udmod}(A,\Hom(M,N)),
\]
which is natural in $M$ and $N$.
\end{cor}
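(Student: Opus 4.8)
The plan is to reduce the statement to Theorem~\ref{thm-main} applied to the enveloping algebra $A^{e}$. By Remark~\ref{rem-Frob-properties}(iii), $A^{e}$ is Frobenius with Frobenius system $(\epsilon\otimes\epsilon,\{a_{i}\otimes b_{j}\},\{b_{i}\otimes a_{j}\})$, hence with Frobenius element $\xi_{A^{e}}=\sum_{i,j}(a_{i}\otimes b_{j})\otimes(b_{i}\otimes a_{j})$; in particular $A^{e}$ is self-injective, $A^{e}\udmod$ is defined, and both the regular bimodule $A$ and $H:=\Hom(M,N)$ (the latter with the $(A,A)$-bimodule structure of \eqref{eqn-bimod-Hom}) are objects of $A^{e}\udmod$. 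Applying Theorem~\ref{thm-main} to $A^{e}$ and the modules $A,H$ gives
\[
\Hom_{A^{e}\udmod}(A,H)\;\cong\;\frac{\Hom_{A^{e}}(A,H)}{\xi_{A^{e}}\cdot\Hom_{\Bbbk}(A_{0},H_{0})},
\]
where $\Hom_{\Bbbk}(A_{0},H_{0})$ carries the $(A^{e})^{e}$-module structure of \eqref{eqn-bimod-Hom} (and $\xi_{A^{e}}\cdot\Phi\in\Hom_{A^{e}}(A,H)$ for every $\Phi$, by Lemma~\ref{lem-trivial-invariants} applied to $A^{e}$). On the other hand, the invariants discussion just before Theorem~\ref{thm-main} identifies $\Hom_{A^{e}}(A,H)$ with $\Hom_{A}(M,N)$ via evaluation at the unit $1_{A}$. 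Since Theorem~\ref{thm-main} for $A$ itself gives $\Hom_{A\udmod}(M,N)\cong\Hom_{A}(M,N)/\xi_{A}\cdot\Hom(M,N)$, the whole statement comes down to showing that, under $\Psi\mapsto\Psi(1_{A})$, the subspace $\xi_{A^{e}}\cdot\Hom_{\Bbbk}(A_{0},H_{0})$ is carried exactly onto $\xi_{A}\cdot\Hom(M,N)$ inside $\Hom_{A}(M,N)$.

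To prove this I would first unwind the $(A^{e})^{e}$-action: since $(b_{i}\otimes a_{j})$ acts on $x\in A$ by $x\mapsto b_{i}xa_{j}$ and $(a_{i}\otimes b_{j})$ acts on $\psi\in\Hom(M,N)$ by $\psi\mapsto(m\mapsto a_{i}\psi(b_{j}m))$, one computes that for $\Phi\in\Hom_{\Bbbk}(A_{0},H_{0})$,
\[
(\xi_{A^{e}}\cdot\Phi)(1_{A})\;=\;\Bigl(m\longmapsto\sum_{i,j}a_{i}\,\Phi(b_{i}a_{j})(b_{j}m)\Bigr).
\]
For the inclusion ``$\supseteq$'', given $h\in\Hom_{\Bbbk}(M_{0},N_{0})$ I would take $\Phi:=\epsilon(-)\,h$; then $\Phi(b_{i}a_{j})=\epsilon(b_{i}a_{j})h$, and since $\sum_{i}a_{i}\epsilon(b_{i}a)=a$ (equivalently $\epsilon(b_{i}a_{j})=\delta_{ij}$) the displayed formula collapses to $m\mapsto\sum_{j}a_{j}h(b_{j}m)=(\xi_{A}\cdot h)(m)$. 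For the reverse inclusion ``$\subseteq$'', given $\Phi$ I would set $h(m):=\sum_{j}\Phi(a_{j})(b_{j}m)$, so that $(\xi_{A}\cdot h)(m)=\sum_{i,j}a_{i}\Phi(a_{j})(b_{j}b_{i}m)$; applying the commutation identity $\sum_{\ell}(wa_{\ell})\otimes b_{\ell}=\sum_{\ell}a_{\ell}\otimes(b_{\ell}w)$ of Lemma~\ref{lem-canonical-bimod} with $w=b_{i}$ to the $\Bbbk$-bilinear map $(p,q)\mapsto\Phi(p)(qm)$ yields $\sum_{\ell}\Phi(b_{i}a_{\ell})(b_{\ell}m)=\sum_{\ell}\Phi(a_{\ell})(b_{\ell}b_{i}m)$, whence $(\xi_{A}\cdot h)(m)=\sum_{i,j}a_{i}\Phi(b_{i}a_{j})(b_{j}m)=(\xi_{A^{e}}\cdot\Phi)(1_{A})(m)$.

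Combining these, $\Hom_{A\udmod}(M,N)\cong\Hom_{A}(M,N)/\xi_{A}\cdot\Hom(M,N)\cong\Hom_{A^{e}}(A,H)/\xi_{A^{e}}\cdot\Hom_{\Bbbk}(A_{0},H_{0})\cong\Hom_{A^{e}\udmod}(A,H)$, and naturality in $M$ and $N$ follows from the naturality asserted in Theorem~\ref{thm-main} together with the evident functoriality of the two identifications. The step I expect to require the most care is precisely this matching of the two ``null-homotopy'' subspaces, and in particular the inclusion ``$\subseteq$'': one has to guess the correct $\Bbbk$-linear prehomotopy $h$ attached to a given $\Phi$ and then invoke the commutation property of the Frobenius element of $A$ (Lemma~\ref{lem-canonical-bimod}); everything else is bookkeeping with the nested enveloping-algebra actions.
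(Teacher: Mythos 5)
Your proposal is correct and follows essentially the same route as the paper: apply Theorem \ref{thm-main} to the Frobenius algebra $A^e$, identify the numerator $\Hom_{A^e}(A,\Hom(M,N))$ with $\Hom_A(M,N)$ by evaluation at $1_A$, and match the two null-homotopy subspaces, using $\Phi=\epsilon(-)h$ for one inclusion and the commutation identity of Lemma \ref{lem-canonical-bimod} for the other. The only (immaterial) difference is that in the inclusion ``$\subseteq$'' the paper applies the commutation relation to the outer index to rewrite $(\xi_{A^e}\Phi)(1)$ directly as $\xi_A\cdot\bigl(\sum_i a_i\Phi(b_i)\bigr)$, whereas you apply it to the inner index with the prehomotopy $h(m)=\sum_j\Phi(a_j)(b_jm)$; both computations are valid.
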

\begin{proof}
By Remark \ref{rem-Frob-properties} (iii), The algebra $A^e$ is Frobenius with a Frobenius system $(\epsilon\otimes \epsilon,~ \{a_i\otimes b_j\}, ~\{b_i\otimes a_j\})$. Therefore, by Theorem \ref{thm-main}, 
\[
\Hom_{A^e\udmod}(A,\Hom(M,N))=\dfrac{\Hom_{A^e}(A,\Hom(M,N))}{\xi_{A^e}\cdot \Hom(A,\Hom(M,N))}.
\]
By the discussion prior to the proposition, the numerator in the above expression agrees with $\Hom_A(M,N)$. Thus we are reduced to showing that the expression in the denominator agrees with the space
\begin{equation}\label{eqn-compare-1}
\xi_A\cdot \Hom(M,N)=\left\{f(m)=\sum_ia_ig(b_im) ,~g\in \Hom(M,N)\right\}.
\end{equation}

The Frobenius element for $A^e$ equals
\[
\xi_{A^e}=\sum_{i,j}a_i\otimes b_j\otimes b_i\otimes a_j\in A^e\otimes (A^e)^{\mathrm{op}}=A\otimes A^{\mathrm{op}}\otimes A^{\mathrm{op}}\otimes A.
\]
If $h\in \Hom(A,\Hom(M,N))$, then, for any $a\in A$ and $m\in M$, we have
\[
(\xi_{A^e}h)(a)(m)=\sum_{i,j}\left( (a_i\otimes b_j)h(b_iaa_j) \right)(m)=\sum_{i,j}a_i(h(b_iaa_j))(b_jm).
\]
Using Lemma \ref{lem-canonical-bimod}, the last term can be identified with
\[
\sum_{i,j}a_i(h(b_iaa_j))(b_jm)=\sum_{i,j}aa_i(h(b_ia_j))(b_jm),
\]
which in turn shows that $\xi_{A^e}h:A\lra \Hom(M,A)$ is left $A$-linear. Taking $a=1$, we obtain
\begin{equation}\label{eqn-compare-2}
(\xi_{A^e}h)(1)(m)=\sum_{i,j}a_ih(b_ia_j)(b_jm)= \sum_{i,j}a_ja_ih(b_i)(b_jm)=\sum_j a_j (\sum_i(a_i h(b_i)(b_jm))),
\end{equation}
where we have used Lemma \ref{lem-canonical-bimod} again in the second equality.

Comparing the expressions in equations \eqref{eqn-compare-1} and \eqref{eqn-compare-2}, it suffices to show that any linear map $g\in \Hom(M,N)$ arises as
$g=\sum_i a_i h_g(b_i)$
for some $h_g\in \Hom(A,\Hom(M,N))$. This can be explicitly done by taking
\[
h_g(a)=\epsilon(a)g,
\]
for then we will have
\[
\sum_{i}a_ih_g(b_i)=\sum_i a_i\epsilon(b_i)g=g.
\]
Here we have used the defining property of a Frobenius system (Definition \ref{def-Frob-system}, taking $a=1$) in the last equality. The proposition follows.
\end{proof}

Before we end this discussion, let us point that the above morphism space is an analogue of the classical (degree zero) Tate cohomology for a finite group $G$ with coefficients in a $G$-module. To do this, let us identify
\[
\xi_{A}\cdot \Hom(M,N)\cong \xi_A A^e  \otimes_{A^e} \Hom(M,N) \cong A\otimes_{A^e} \Hom(M,N),
\]
the last equality following from Lemma \ref{lem-canonical-bimod}. Thus we may identify
\begin{equation}
\Hom_{A\udmod}(M,N)\cong \dfrac{\Hom_{A^e}(A,\Hom(M,N))}{A\otimes_{A^e}\Hom(M,N)}.
\end{equation}
This is an analogue of the degree zero Tate cohomology group because, if $A=\Bbbk G$ is the group algebra of a finite group $G$, then, with the embedding
\[
\Bbbk G\lra \Bbbk G\otimes \Bbbk G^{\mathrm{op}}, \quad \quad  h\mapsto h\sum_{g}g\otimes g^{-1},
\]
we have
\[
A\otimes_{A^e} \Hom(M,N)=\left\{f:M\lra N|\exists h\in \Hom(M,N),~f(m)=\sum_{g\in G}gh(g^{-1}m)\right\}.
\]
Thus Theorem \ref{thm-main} recovers the degree zero Tate cohomology of the $G$-module $\Hom(M,N)$:
\begin{equation}
\Hom_{\Bbbk G\udmod}(M,N)=\dfrac{\Hom(M,N)^G}{\Hom(M,N)_G},
\end{equation}
the space of invariant quotient its subspace of coinvariants.

%%%%%%%%%%%%%%%%%%%%%%%%%%%%%%%%%%%%%%%%%
%%%%%%%%%%%%%%%%%%%%%%%%%%%%%%%%%%%%%%%%%
%%%%%%%%  Applications
%%%%%%%%%%%%%%%%%%%%%%%%%%%%%%%%%%%%%%%%%
%%%%%%%%%%%%%%%%%%%%%%%%%%%%%%%%%%%%%%%%%

\section{Some applications} \label{sec-apps}

\paragraph{Finite-dimensional Hopf algebras.} By a classical Theorem of Larson and Sweedler \cite{LaSw}, finite-dimensional Hopf algebras are Frobenius. In \cite[Section 5]{QYHopf}, we have presented a morphism space formula for the stable category. We now recall the formula and point out its the relationship with Theorem \ref{thm-main}. 

Consider a finite-dimensional Hopf algebra $H$. Denote its comultiplication by $\Delta:H\lra H\otimes H$, counit by $\epsilon:H\lra \Bbbk$ and antipode by $S:H\lra H^{\mathrm{op}}$. Given any $h\in H$ we will follow Sweedler and write 
\begin{equation}
\Delta(h)=\sum_h h_1\otimes h_2.
\end{equation}
The morphism space of any two $H$-modules $M,N$ is equipped with a natural left $H$-module structure: for any $h\in H$, $\phi\in \Hom_\Bbbk(M,N)$, the map $h\cdot \phi$ is determined by the formula
\begin{equation}
(h\cdot \phi)(m)=\sum_h h_2\phi(S^{-1}(h_1)m)
\end{equation}
for any $m\in M$.

Let $\Lambda\in H$ be a non-zero left integral over $\Bbbk$, which is, up to rescaling, uniquely characterized by the property
\begin{equation}
h\Lambda=\epsilon(h)\Lambda
\end{equation}
for any $h\in H$. Then, given any $H$-modules $M$ and $N$, we have exhibited, in \cite[Corollary 5.7]{QYHopf}, the morphism space formula in $H\udmod$:
\begin{equation}
\Hom_{H\udmod}(M,N)=\dfrac{\Hom(M,N)^H}{\Lambda\cdot \Hom(M,N)}.
\end{equation}
This generalizes the identification of morphism spaces with the degree zero Tate cohomology of finite groups (see the last part of the previous section) to arbitrary finite-dimensional Hopf algebras. 

In order to match this formula with Theorem \ref{thm-main}, it suffices to note that, for a finite-dimensional Hopf algebra, the datum $(t, \{ h_2\}, \{S^{-1}(h_1)\} )$ constitutes a Frobenius system for $H$, where $t\in H^*$ is a \emph{left integral} on $H$ (i.e., an element of $H^*$ satisfying, for all $g\in H^*$, that $gt=g(1_H)t$). This can be found, for instance, in \cite[Proposition 6.4]{Kadison} , or the reader may try to prove it directly by definitions and axioms of Hopf algebras.

\paragraph{Stable center.} As a simple application, we reprove a Theorem of Brou\'{e} characterizing the \emph{stable center} of a Frobenius algebra $A$, which is first established in \cite{Broue} when $A$ is symmetric. Recall that the stable center of $A$ (see also \cite[Definition 5.9.1]{ZimRepBook}) equals
\begin{equation}
\widehat{Z}(A)=\End_{A^e\udmod}(A).
\end{equation}

We apply Theorem \ref{thm-main} to explicitly compute the stable center, for general Frobenius algebras that are not necessarily symmetric. Let $A$ be a Frobenius algebra with a Frobenius system $(\epsilon,~\{a_i\},~\{b_i\})$. Then
\begin{equation}
\widehat{Z}(A)=\dfrac{\Hom_{A^e}(A,A)}{\xi_{A^e}\cdot \Hom(A,A)}.
\end{equation}
The space $\Hom_{A^e}(A,A)$ is isomorphic to the usual center $Z(A)$ of $A$ by sending any $(A,A)$-bilinear map $f:A\lra A$ to the element $f(1)$.

Now, if $h\in \Hom(A,A)$, we have
\begin{equation*}
(\xi_{A^e}\cdot h)(a)=\sum_{i,j\in I}(a_i\otimes b_j)h((b_i\otimes a_j)\cdot a) =\sum_{i,j\in I}a_ih(b_iaa_j)b_j.
\end{equation*}
Such a map, when evaluated on $a=1$, equals
\[
(\xi_{A^e}\cdot h)(1)=\sum_{i,j\in I}a_ih(b_ia_j)b_j =\sum_{j} a_j\left(\sum_{i}a_ih(b_i)\right)b_j,
\]
where we have used Lemma \ref{lem-canonical-bimod} in the last step. It is thus clear that, under the identification of $\Hom_{A^e}(A,A)\cong Z(A)$, 
\[
\left\{\xi_{A^e}\cdot h\right\}\cong \left\{\sum_{j}a_j\left(\sum_ia_ih(b_i)\right)b_j\right\}\subset \left\{\sum_{j}a_jab_j\big|a\in A \right\}.
\]
We claim that the inclusion above is actually an identity. Indeed, it suffices to show that, for any $a\in A$, there exists an $h_a\in \Hom(A,A)$ such that $\sum_ia_ih_a(b_i)=a$. Take $h_a:=\epsilon(\textrm{-})a$. Then, we have
\[
\sum_i a_ih_a(b_i)=(\sum_ia_i\epsilon(b_i))a = a,
\]
the last equality coming from Definition \ref{def-Frob-system}. The claim follows.

Denote by $\xi_A\cdot a:=\sum_ia_iab_i$, and similarly $\xi_A\cdot A=\{\xi_A\cdot a|a\in A\}$. Remark \ref{rem-Frob-properties} (ii) shows that this subspace is independent of choices of $\xi_A$. In conclusion, we have established the following.

\begin{cor}[Brou{\'e}'s Theorem]\label{thm-stable-center}
There is an isomorphism of commutative algebras
\[
\widehat{Z}(A)=\dfrac{Z(A)}{\xi_{A}\cdot A}.
\]
\end{cor}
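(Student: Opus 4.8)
The plan is to apply Theorem \ref{thm-main} to the Frobenius algebra $A^e$ and then unwind both the numerator and the denominator of the resulting quotient into purely intrinsic data about $A$. For the numerator, I would use the standard identification $\Hom_{A^e}(A,A) \cong Z(A)$, sending an $(A,A)$-bilinear map $f$ to $f(1)$; this is an algebra isomorphism, and it is routine to check that it is compatible with the (commutative) multiplications. For the denominator, I would start from the explicit Frobenius system for $A^e$ recorded in Remark \ref{rem-Frob-properties}(iii), namely $(\epsilon\otimes\epsilon, \{a_i\otimes b_j\}, \{b_i\otimes a_j\})$, whose associated Frobenius element is $\xi_{A^e} = \sum_{i,j} a_i\otimes b_j\otimes b_i\otimes a_j$. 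Acting on $h\in\Hom(A,A)$ by $\xi_{A^e}$ via the bimodule structure \eqref{eqn-bimod-Hom} gives $(\xi_{A^e}\cdot h)(a) = \sum_{i,j} a_i h(b_i a a_j) b_j$, and evaluating at $a=1$ yields $\sum_j a_j\big(\sum_i a_i h(b_i)\big) b_j$ after an application of Lemma \ref{lem-canonical-bimod} to pull the $a_j$ across.

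The key remaining step is to identify the set of all such elements $\sum_j a_j\big(\sum_i a_i h(b_i)\big) b_j$, as $h$ ranges over $\Hom(A,A)$, with the set $\xi_A\cdot A = \{\sum_j a_j a b_j \mid a\in A\}$. The inclusion $\subseteq$ is immediate since $\sum_i a_i h(b_i)$ is some element of $A$. For the reverse inclusion, given $a\in A$ I would exhibit a specific $h_a\in\Hom(A,A)$ with $\sum_i a_i h_a(b_i) = a$; the natural choice is $h_a := \epsilon(-)\,a$, and then $\sum_i a_i h_a(b_i) = \big(\sum_i a_i\epsilon(b_i)\big)a = a$ by the defining identity of a Frobenius system (Definition \ref{def-Frob-system} with the argument equal to $1$). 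This shows the denominator of the Theorem \ref{thm-main} quotient for $A^e$ corresponds exactly to $\xi_A\cdot A\subseteq Z(A)$, and by Remark \ref{rem-Frob-properties}(ii) this subspace does not depend on the choice of Frobenius system. Assembling the pieces gives $\widehat{Z}(A) = \End_{A^e\udmod}(A) \cong Z(A)/(\xi_A\cdot A)$.

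I expect the main obstacle to be purely bookkeeping: keeping the four tensor factors of $A^e\otimes(A^e)^{\op} = A\otimes A^{\op}\otimes A^{\op}\otimes A$ in the correct order when computing the action of $\xi_{A^e}$, and making sure the two applications of Lemma \ref{lem-canonical-bimod} (one to verify $A$-bilinearity of $\xi_{A^e}\cdot h$, one to rewrite the $a=1$ value in the form $\sum_j a_j(\cdots)b_j$) are applied to the correct summation index. There is also the minor point that one should check $\xi_A\cdot A$ actually lands inside $Z(A)$ and is an ideal there, so that the quotient is a commutative algebra and the isomorphism is one of algebras and not merely of vector spaces; this follows from Lemma \ref{lem-canonical-bimod} together with the fact that $\widehat{Z}(A)$ is by construction the endomorphism ring of $A$ in $A^e\udmod$, hence inherits a commutative algebra structure, and the isomorphism of Theorem \ref{thm-main} is natural, hence multiplicative here. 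None of these steps is deep; the content is entirely in the clean computation already set up by the preceding lemmas.
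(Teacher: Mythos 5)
Your proposal is correct and follows essentially the same route as the paper: apply Theorem \ref{thm-main} to $A^e$, identify $\Hom_{A^e}(A,A)\cong Z(A)$ via $f\mapsto f(1)$, compute $(\xi_{A^e}\cdot h)(1)=\sum_j a_j\bigl(\sum_i a_i h(b_i)\bigr)b_j$ using Lemma \ref{lem-canonical-bimod}, and realize an arbitrary $a\in A$ as $\sum_i a_i h_a(b_i)$ with $h_a=\epsilon(\mbox{-})a$. The paper's own proof environment consists precisely of the ideal-membership check you flag as the remaining ``minor point,'' so nothing is missing.
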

\begin{proof}
By the above discussion, it only suffices to show that $\xi_A\cdot A$ is an ideal of $Z(A)$. This is clear, since for any $z\in Z(A)$ and $a\in A$, we have
\[
z(\sum_ia_i a b_i)=\sum_ia_i (za) b_i= \sum_ia_i (az) b_i = (\sum_ia_i a b_i)z.
\]
The result follows.
\end{proof}

If $A=\Bbbk G$ is the group algebra of a finite group, then the Theorem implies that its stable center equals
the span of (conjugacy) class functions whose order of stablizer group does not divide the characteristic of the ground field.

\paragraph{The stable category of $\Bbbk[x]/(x^{n})$.} Let us consider this specific example when $A=\Bbbk[x]/(x^{n})$. The stable category $A\udmod$ is triangulated equivalent to the category of matrix factorizations over $\Bbbk[x]$ with potential $x^{n}$. The latter realization, when appropriately graded, is fundamental to the construction of Khovanov-Rozansky homology groups of knots and links \cite{KR1}.

To apply the main theorem to this example, we use that $A$ has the Frobenius trace 
\[
\epsilon (x^i)=
\left\{
\begin{array}{cc}
0 & i\neq n-1\\
1 & i = n-1
\end{array}
\right.
\]
and the ordered bases $\{1,x,\dots, x^{n-1}\}$, $\{x^{n-1},x^{n-2},\dots, 1\}$ that are dual to each other under the pairing. It follows that a morphism of $A$-modules $f:M\lra N$ is \emph{null-homotopic} (c.f.~the footnote of Introduction) if and only if there is a linear map $h:M\lra N$ such that
\begin{equation}\label{eqn-null-homotopy}
f(m)=\sum_{i=0}^{n-1} x^ih(x^{n-1-i}m)
\end{equation}
for all $m\in M$. Now any module in $A\udmod$ is a direct sum of indecomposables, the latter classified, up to isomorphism, by quotients of $A$ of the form $V_i:=\Bbbk[x]/(x^{i+1})$, $i=0,\dots, n-1$. The endomorphism algebra of $V_i$ in $A\udmod$ is a quotient of $\End_{A}(V_i)\cong \Bbbk[x^{i+1}]$ by the ideal of null-homotopic morphisms. It is easy to identify this ideal using equation \eqref{eqn-null-homotopy}, which equals $(x^{n-1-i})$ if $n-1-i\leq i$, or $0$ if $n-1-i\geq i+1$. Therefore, we obtain
\begin{equation}
\End_{A\udmod}(V_i)\cong 
\left\{
\begin{array}{cc}
\Bbbk[x]/(x^{n-1-i}) & \textrm{ if } i\geq (n-1)/2,\\
\Bbbk[x]/(x^{i+1}) & \textrm{ if } i\leq (n-2)/2.
\end{array}
\right.
\end{equation}

We next turn to identify the stable center of the category. Applying Theorem \ref{thm-stable-center}, we deduce that the size of the stable center of $A\udmod$ depends on the characteristic of $\Bbbk$ as follows. A simple computation shows that
\[
\xi_A\cdot A=\left\{\sum_{i=0}^{n-1} x^i a x^{n-1-i}\big|a=a_0+a_1x+\dots+a_{n-1}x^{n-1}\in A\right\}=\left\{a_0nx^{n-1}\big|a_0\in \Bbbk\right\}.
\]
It follows that $\xi_A\cdot A$ is trivial if and only if $\mathrm{char}(\Bbbk)|n$. We have thus established the following.

\begin{cor} 
The stable center of the Frobenius algebra $\Bbbk[x]/(x^{n})$ equals
\[
\begin{gathered}
\widehat{Z}(\Bbbk[x]/(x^{n}))
=\dfrac{\Bbbk[x]/(x^{n+1})}{(nx^{n-1})}
=\left\{
\begin{array}{cc}
\dfrac{\Bbbk[x]}{(x^{n-1})} & \mathrm{char}(\Bbbk)\nmid n,\\
\dfrac{\Bbbk[x]}{(x^{n})} & \mathrm{char}(\Bbbk)~|~n.
\end{array}
\right.
\end{gathered}
\]
\end{cor}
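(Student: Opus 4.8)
The plan is to feed the explicit Frobenius system for $A=\Bbbk[x]/(x^{n})$ recorded just above the statement into Brou\'{e}'s Theorem (Corollary~\ref{thm-stable-center}) and then split into the two cases according to whether $\mathrm{char}(\Bbbk)$ divides $n$. First I would observe that $A$ is commutative, so its center is all of $A$; under the identification $\Hom_{A^e}(A,A)\cong Z(A)$, $f\mapsto f(1)$, used in Corollary~\ref{thm-stable-center}, this reads $Z(A)=\Bbbk[x]/(x^{n})$, whence
\[
\widehat{Z}(A)=\dfrac{\Bbbk[x]/(x^{n})}{\xi_{A}\cdot A}.
\]
(The numerator $\Bbbk[x]/(x^{n+1})$ displayed in the statement is just this $Z(A)=\Bbbk[x]/(x^{n})$, written inside the ambient polynomial ring so that the generator $nx^{n-1}$ of the denominator can be read off directly.)

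Next I would reuse the computation of $\xi_{A}\cdot A$ from the paragraph preceding the statement. Since multiplication in $A$ is commutative and $\xi_{A}=\sum_{i=0}^{n-1}x^{i}\otimes x^{n-1-i}$, one gets $\xi_{A}\cdot a=\sum_{i=0}^{n-1}x^{i}ax^{n-1-i}=n\,a\,x^{n-1}$ for every $a\in A$, and in $\Bbbk[x]/(x^{n})$ only the constant term of $a$ survives after multiplication by $x^{n-1}$. Hence $\xi_{A}\cdot A=\{\,na_{0}x^{n-1}\mid a_{0}\in\Bbbk\,\}$, a subspace of $\Bbbk x^{n-1}$ that equals all of $\Bbbk x^{n-1}$ exactly when $n$ is invertible in $\Bbbk$, and is $0$ otherwise; by Remark~\ref{rem-Frob-properties}(ii) this space is moreover independent of the chosen Frobenius system, and by Corollary~\ref{thm-stable-center} it is an ideal of $Z(A)$.

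Finally comes the case split. If $\mathrm{char}(\Bbbk)\mid n$, then $n=0$ in $\Bbbk$, so $\xi_{A}\cdot A=0$ and $\widehat{Z}(A)=\Bbbk[x]/(x^{n})$. If $\mathrm{char}(\Bbbk)\nmid n$, then $n\in\Bbbk^{\times}$, so $\xi_{A}\cdot A=\Bbbk x^{n-1}$; since $x\cdot x^{n-1}=x^{n}=0$ in $A$, the principal ideal $(x^{n-1})\subseteq\Bbbk[x]/(x^{n})$ is itself one-dimensional and coincides with $\Bbbk x^{n-1}$. Therefore $\widehat{Z}(A)=\bigl(\Bbbk[x]/(x^{n})\bigr)\big/(x^{n-1})\cong\Bbbk[x]/(x^{n-1})$, and as $\xi_{A}\cdot A$ is an ideal this isomorphism is one of commutative algebras, not merely of vector spaces. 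There is no genuine obstacle here: the only points requiring a line of care are the identification $\xi_{A}\cdot A=\Bbbk x^{n-1}$ in the invertible-$n$ case and the fact that the quotient must be taken in the category of rings, both of which are immediate once one notes that $(x^{n-1})\subseteq\Bbbk[x]/(x^{n})$ is one-dimensional.
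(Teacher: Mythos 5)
Your proposal is correct and follows essentially the same route as the paper: apply Corollary \ref{thm-stable-center} with $Z(A)=A$ and the explicit computation $\xi_A\cdot a=na_0x^{n-1}$, then split on whether $n$ is invertible in $\Bbbk$. Your reading of the numerator $\Bbbk[x]/(x^{n+1})$ as really being $Z(A)=\Bbbk[x]/(x^{n})$ is the right one (the exponent $n+1$ in the displayed middle term is a typo, as only $\Bbbk[x]/(x^{n})$ is consistent with the two cases on the right).
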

This example is also discussed in \cite[Example 5.9.8]{ZimRepBook} using more sophisticated methods.

\addcontentsline{toc}{section}{References}

% ====================================================================
% REFERENCES

%\bibliographystyle{alpha}
%\bibliography{qy-bib}

%
% ====================================================================

\vspace{0.1in}

\noindent Y.~Q.: { \sl \small Department of Mathematics, California Institute of Technology, Pasadena, CA 91125, USA} \newline \noindent {\tt \small email: youqi@caltech.edu}

% ==============================================================================
%
\end{document}